\newcommand{\FLT}[1]{{\rm FLT}_{#1}}
\newcommand{\into}{\rightarrow}
\newcommand{\Z}{\mathbb{Z}}
\newcommand{\N}{\mathbb{N}}
\newcommand{\Q}{\mathbb{Q}}
\newcommand{\COL}{{\rm COL}}
\newcommand{\I}{{\rm I}}
\newcommand{\D}{{\rm D}}
\newcommand{\K}{{\rm K}}
\newcommand{\U}{{\rm U}}
\newcommand{\st}{{\ \colon \ }}
\newcounter{savenumi}
\newtheorem{theoremfoo}{Theorem}[section] 
\newenvironment{theorem}{\pagebreak[1]\begin{theoremfoo}}{\end{theoremfoo}}
\newtheorem{lemmafoo}[theoremfoo]{Lemma}
\newenvironment{lemma}{\pagebreak[1]\begin{lemmafoo}}{\end{lemmafoo}}
\newtheorem{notefoo}[theoremfoo]{Note}
\newtheorem{conjecturefoo}[theoremfoo]{Conjecture}
\newenvironment{conjecture}{\pagebreak[1]\begin{conjecturefoo}}{\end{conjecturefoo}}
\newtheorem{corollaryfoo}[theoremfoo]{Corollary}
\newenvironment{corollary}{\pagebreak[1]\begin{corollaryfoo}}{\end{corollaryfoo}}
\newtheorem{notationfoo}[theoremfoo]{Notation}
\newenvironment{notation}{\pagebreak[1]\begin{notationfoo}}{\end{notationfoo}}
\newcommand{\fig}[1] 
{
\begin{figure}
\begin{center}
\input{#1}
\end{center}
\end{figure}
}
\newtheorem{dfntn}[theoremfoo]{Def}
\newenvironment{definition}{\pagebreak[1]\begin{dfntn}\rm}{\end{dfntn}}
\newenvironment{proof}
   {\pagebreak[1]{\narrower\noindent {\bf Proof:\quad\nopagebreak}}}{\QED}
\newcommand{\yyskip}{\penalty-50\vskip 5pt plus 3pt minus 2pt}
\newcommand{\blackslug}{\hbox{\hskip 1pt
       \vrule width 4pt height 8pt depth 1.5pt\hskip 1pt}}
\newcommand{\QED}{{\penalty10000\parindent 0pt\penalty10000
       \hskip 8 pt\nolinebreak\blackslug\hfill\lower 8.5pt\null}
       \par\yyskip\pagebreak[1]}
\newcommand{\BLUEB}{{\penalty10000\parindent 0pt\penalty10000
       \hskip 8 pt\nolinebreak\hbox{\ }\hfill\lower 8.5pt\null}
       \par\yyskip\pagebreak[1]}
\journal{Discrete Mathematics}
\begin{document}

\begin{frontmatter}

\title{ Fermat's Last Theorem, Schur's Theorem (in Ramsey Theory), and the Infinitude of the Primes}



\author[billmainaddress]{William Gasarch\corref{mycorrespondingauthor}}
\cortext[mycorrespondingauthor]{Corresponding author}
\ead{gasarch@umd.edu}

\address[billmainaddress]{Univ.\ of MD at College Park, Dept of Comp.\ Sci.}

\begin{abstract}
Alpoge and Granville (separately) gave novel proofs that the primes are infinite that use
Ramsey Theory. In particular, they use Van der Waerden's Theorem and some number theory.
We prove the primes are infinite using an easier theorem from Ramsey Theory, namely
Schur's Theorem, and some number theory (Elsholtz independently obtained the same proof that
the primes were infinite). In particular, we use the $n=3$ case of
Fermat's last theorem. 
We also apply our method to show other domains have an infinite number of irreducibles.
\end{abstract}

\begin{keyword}
\texttt{Primes; Ramsey Theory; Schur's Theorem; Fermat's Last Theorem; Irreducibles; Colorings}
\MSC[2020] 11A41, 05D10
\end{keyword}

\end{frontmatter}


\section{Introduction}

\begin{notation}
We take $\N$ to be $\{0,1,2,3,\ldots\}$.
\end{notation} 

\begin{definition}
Let $a\in\N$ and $\D$ be a domain.
\begin{enumerate}
\item 
{\it $\FLT a$ holds in $\D$} means that 
the equation 

$$x^a + y^a = z^a$$

\noindent
has no solution in $\D-\{0\}$. 
\item
$\FLT a$ means $\FLT a$ holds in $\Z$. 
\end{enumerate}
\end{definition}

In 1770 Euler proved $\FLT 3$ (see the texts of Ireland \& Rosen~\cite{IR-1982} or Hardy \& Wright~\cite{HR-1979}
for a modern treatment of Euler's proof). 
In 1916 Schur proved a theorem in Ramsey Theory (which we will state later) 
that is referred to as {\it Schur's Theorem (in Ramsey Theory)}
(see the texts of Graham-Rothschild-Spencer~\cite{GRS-1990} or Landman \& Robertson~\cite{LR-2004} for 
a modern treatment of Schur's proof). 
In this paper we use these two theorems
to prove the primes are infinite. 
(Elsholtz~\cite{Elsholtz-2021} independently obtained
the same proof that the primes were infinite.) 
While there are of course easier proofs, we think it is
of interest that it can be derived from Schur's Theorem and $\FLT 3$. 

Alpoge~\cite{Alpoge-2015}
proved the primes were infinite using elementary number theory and 
Van der Warden's theorem.
Granville~\cite{Granville-2017}
proved that the primes were infinite from 
the fact that that there can never be four squares in arithmetic progression (attributed to Fermat)
and 
Van der Warden's theorem.
Our proof compares to their proofs as follows:
\begin{itemize}
\item
Our proof uses easier Ramsey Theory then Alpoge's or Granville's proof.
\item 
Our proof uses harder number theory than Alpoge's proof.
\item
Our proof uses about the same level of number theory as Granville's proof.
\item
We prove a general theorem that allows us to show other domains have an infinite
number of irreducibles. 
\end{itemize}

In Section~\ref{se:pre} we present Schur's Theorem and definitions from number theory.
In Section~\ref{se:link} we present a condition on an integral domains $\D$ that implies
$\D$ has an infinite number of irreducibles.
That condition easily applies to $\Z$. Hence we obtain that $\Z$ has an infinite number of
irreducibles. Since in $\Z$, every irreducible is a prime, we also get that there are an infinite
number of primes.
In Section~\ref{se:sqrt} 
we use our results to show that, for all $d\in\N$,  $\Z[\sqrt{-d}]$ 
has an infinite number of irreducibles.
In Section~\ref{se:conj} we use our results, together with a widely believed conjecture,
to show that many domains have an infinite number of irreducibles.
In Section~\ref{se:open} we present an open problem.


\section{Preliminaries}\label{se:pre}

The following is Schur's Theorem (from Ramsey theory). It can be proven from Ramsey's Theorem.

\begin{lemma}\label{le:schur}
For all $c$, for all $c$-colorings $COL:\N-\{0\}\into [c]$, there exist $x,y,z\in\N-\{0\}$ with 
$x+y=z$ such that 
$$COL(x)=COL(y)=COL(z).$$
\end{lemma}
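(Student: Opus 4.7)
The plan is to derive Schur's Theorem from the multicolor version of Ramsey's Theorem for edge-colorings of complete graphs. Given $c$, let $N = R_c(3)$ denote the least integer such that every $c$-coloring of the edges of $K_N$ contains a monochromatic triangle; this number exists by the standard finite Ramsey Theorem for graphs.

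Given an arbitrary $c$-coloring $\COL \colon \N-\{0\} \to [c]$, I would induce an edge-coloring $\chi$ of the complete graph on the vertex set $\{1, 2, \ldots, N\}$ by the rule $\chi(\{i,j\}) = \COL(|j-i|)$. By the choice of $N$, there exist three vertices $i < j < k$ such that the edges $\{i,j\}$, $\{j,k\}$, $\{i,k\}$ all receive the same color under $\chi$. Unpacking the definition of $\chi$, this yields
$$\COL(j-i) = \COL(k-j) = \COL(k-i).$$
Setting $x = j-i$, $y = k-j$, $z = k-i$ gives three positive integers with $x+y = z$ and $\COL(x) = \COL(y) = \COL(z)$, which is exactly what the lemma asserts.

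The only genuine content that must be invoked from outside is the finite Ramsey Theorem for $c$ colors and triangles, i.e.\ the existence of $R_c(3)$. This is the main ``obstacle'' in the sense that it carries the combinatorial weight of the argument, but it is a standard result (provable by iterated pigeonhole on a single vertex's incident edges, or by induction on $c$). The translation from an edge-coloring of $K_N$ to a coloring of $\N-\{0\}$ via the difference $|j-i|$ is the key trick; once this is in place, the addition identity $(j-i) + (k-j) = k-i$ makes the conclusion immediate, with no further calculation required. Note also that we do not need $x$, $y$, $z$ to be distinct --- the statement permits $x = y$ --- so no extra care is needed to separate the three differences.
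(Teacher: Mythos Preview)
Your argument is correct and is precisely the standard derivation of Schur's Theorem from the multicolor Ramsey number $R_c(3)$; the paper itself does not write out a proof of this lemma but only remarks that it ``can be proven from Ramsey's Theorem'' and points to textbook references, so your proof is exactly the argument the paper is alluding to.
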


The following definitions are standard. 

\begin{definition}
Let $\D$ be an integral domain.
\begin{enumerate}
\item
A {\it unit} is a $u\in \D$ such that there exists $v\in \D$ with
$uv=1$.
We let $\U$ be the set of units
if the domain is understood.
\item
An {\it irreducible} is a $p\in \D-\U$ such that if $p=ab$ then
either $a\in \U$ or $b\in \U$.
We let $\I$ be the set of irreducibles 
if the domain is understood.
\item
A {\it prime} is a $p\in\D$ such that if $p$ divides $ab$ then
either $p$ divides $a$ or $p$ divides $b$.
In any integral domain all primes are irreducible.
There are integral domains with irreducibles that are not primes.
The set

\noindent
 $\{a+b\sqrt{-5} \st a,b\in \Z\}$ is one such example:
(a) The element 2 is irreducible, yet (b) 2 is not prime since
2 divides $(1+\sqrt{-5})(1-\sqrt{-5})=6$ but 2 does not divide 
either $1+\sqrt{-5}$ or $1+\sqrt{-5}$. 
\item
We impose an equivalence relation on $\I$: $p$ and $q$ are equivalent
if there exists $u\in \U$ such that $p=uq$.
We say {\it $\I$ is infinite up to units} if the number of equivalence
classes is infinite. In this paper {\it infinite} will mean {\it infinite up to units}. 
\item
An {\it Atomic Integral Domain} is an integral domain such that every element of $\D-(\U\cup \{0\})$
can be written (not necessarily uniquely) as $p_1^{x_1}\cdots p_m^{x_m}$
where the $p_i$'s are irreducible. 
The domains $\Z$ and $\Z[\sqrt{d}]$ are known to be atomic by using norms.
The set of algebraic integers (complex numbers that satisfy monic polynomials over $\Z[x]$)
is an integral domain that is not atomic for a funny reason: there are no
irreducibles. If 
$a$ is a nonzero nonunit algebraic integer then $\sqrt a$ is a nonzero nonunit algebraic 
integer, and $a=\sqrt{a}\times\sqrt{a}$, so $a$ is not irreducible. 
\end{enumerate}
\end{definition}

\section{A Condition for a Domain to Have an Infinite Number of Irreducibles}\label{se:link}

Theorem~\ref{th:link} says that if an integral domain $\D$ has a finite number of
irreducibles then an equation similar to that in FLT has a solution. 
We will use Theorem~\ref{th:link} to derive conditions on $\D$ that imply
it has an infinite number of irreducibles. 

The coloring in the proof of Theorem~\ref{th:link} is similar to the one
used by Alpoge~\cite{Alpoge-2015},
Granville~\cite{Granville-2017}, and
Elsholtz~\cite{Elsholtz-2021}.

\begin{theorem}\label{th:link}~
Let $\D$ be an atomic integral domain that contains $\N$. 
Assume there exists an $n\ge 2$ such that the following equation has no solution:
$$u_xX^n+u_yY^n=u_zZ^n$$
\noindent
where $u_x,u_y,u_z\in \U$ and $X,Y,Z\in \D-\{0\}$. 
Then $\D$ has an infinite number of irreducibles.
\end{theorem}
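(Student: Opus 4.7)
The plan is to prove the contrapositive: assume $\D$ has only finitely many irreducibles up to units, with fixed representatives $p_1,\ldots,p_k$, and produce a solution to the equation $u_xX^n+u_yY^n=u_zZ^n$ in $\D$.

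Since $\D$ is atomic and contains $\N$, every $m\in\N-\{0\}$ admits at least one factorization $m = u\cdot p_1^{a_1}\cdots p_k^{a_k}$ with $u\in\U$ and $a_i\in\N$ (if $m$ is itself a unit, take all $a_i=0$). For each $m$ I would fix one such factorization and denote the chosen exponents by $a_i(m)$. This defines a coloring $\COL:\N-\{0\}\to [n^k]$ by $\COL(m) = (a_1(m)\bmod n,\,\ldots,\,a_k(m)\bmod n)$. The crucial feature is that the color only remembers the exponents modulo $n$, so two elements of the same color differ multiplicatively only by a unit times an $n$-th power.

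Apply Lemma~\ref{le:schur} to this coloring to obtain $x,y,z\in\N-\{0\}$ with $x+y=z$ and $\COL(x)=\COL(y)=\COL(z)$. Let $(r_1,\ldots,r_k)$ denote the common color (with $0\le r_i<n$), and write $a_i(x)=nq_i^{(x)}+r_i$, and similarly for $y$ and $z$. Set $X=\prod_i p_i^{q_i^{(x)}}$, $Y=\prod_i p_i^{q_i^{(y)}}$, $Z=\prod_i p_i^{q_i^{(z)}}$, and $P=\prod_i p_i^{r_i}$. Then the chosen factorizations read $x=u_xPX^n$, $y=u_yPY^n$, $z=u_zPZ^n$. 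Substituting into $x+y=z$ and cancelling the nonzero element $P$ (legal in the integral domain $\D$) yields $u_xX^n+u_yY^n=u_zZ^n$, with $X,Y,Z\neq 0$ because $x,y,z\neq 0$. This contradicts the hypothesis of the theorem.

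The points that demand care are: (i) the atomic hypothesis does not guarantee unique factorization, so one must fix an arbitrary choice of factorization before defining $\COL$; and (ii) the exponents $q_i^{(x)}=(a_i(x)-r_i)/n$ must be nonnegative integers, which is automatic because $0\le r_i<n$ and $r_i\equiv a_i(x)\pmod n$ together force $r_i\le a_i(x)$. Neither is a real obstacle. The genuine content of the argument is the clean match between Schur's Theorem, which supplies a monochromatic triple $x+y=z$, and the \emph{units times $n$-th powers} shape of the forbidden equation, which is exactly what ``same color'' means under the coloring above.
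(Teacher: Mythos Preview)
Your argument is correct and follows essentially the same approach as the paper: assume finitely many irreducibles, color $\N-\{0\}$ by exponent vectors mod $n$, apply Schur's Theorem, and read off the forbidden equation. The only cosmetic difference is that where you cancel the common factor $P=\prod p_i^{r_i}$, the paper instead multiplies through by $\prod p_i^{n-r_i}$ to complete each exponent to a multiple of $n$; either maneuver yields the same conclusion.
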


\begin{proof}
Assume the premise is true.
Assume, by way of contradiction, that $\I$ is finite. 
Let $\I=\{p_1,\ldots,p_m\}$ be formed by taking an irreducible from each equivalence class.

Since $\D$ is atomic, every $x\in\D-\{0\}$ can be written as
$u p_1^{x_1} \cdots p_m^{x_m}$ where $u\in \U$ and $x_1,\ldots,x_m\in\N$
(an $x_i$ can be 0). 
This need not be unique; however, for the sake of definiteness, we will take 
$(x_1,\ldots,x_m)$ to be the lexicographically least tuple.

Recall that $\N\subseteq \D$. 
Let $n$ be as in the premise. 
We define a coloring $\COL$ of $\N-\{0\}$ 
as  follows: 
Color $x=u p_1^{x_1}\cdots p_m^{x_m}$ by the vector 
$$(x_1 \bmod n,\ldots, x_m \bmod n).$$
There are $n^m$ colors, which is finite.
By Lemma~\ref{le:schur} there exists  $(x,y,z)$, and a color 
$(e_1,\ldots,e_m)$, such that

$$\COL(x)=\COL(y)=\COL(z)=(e_1,\ldots,e_m).$$

and

$$x+y=z.$$

We now reason about $x$ but the same logic applies to $y,z$. 
Note that there exist $u\in \U$ and $k_1,\ldots,k_m\in\N$ such that

$$x= u p_1^{k_1n+e_1} \cdots p_m^{k_mn+e_m}$$

hence

$$x p_1^{n-e_1} \cdots p_m^{n-e_m} = u p_1^{(k_1+1)n} \cdots p_m^{(k_m+1)n}=uX^n$$

\noindent
where $X=p_1^{(k_1+1)} \cdots p_m^{(k_m+1)}\in \D$.

Since the same logic applies to $y,z$ we have that 
there exist $X,Y,Z\in \D$ and $u_x,u_y,u_z\in \U$ such that

$x p_1^{n-e_1} \cdots p_m^{n-e_m}=u_xX^n$

$y p_1^{n-e_1} \cdots p_m^{n-e_m}=u_yY^n$

$z p_1^{n-e_1} \cdots p_m^{n-e_m}=u_zZ^n$.

Note that the following hold:
\begin{itemize}
\item 
$u_xX^n+u_yY^n=u_zZ^n$.
\item 
$u_x,u_y,u_z\in \U$.
\item 
$X,Y,Z\in \D-\{0\}$. 
\end{itemize}

This contradicts the premise of the theorem. 
\end{proof}

\begin{theorem}\label{th:strong}
Let $\D$ be an atomic integral domain. 

\begin{enumerate}

\item
Assume that there is an $n_0\in\N$, $n_0\ge 2$,  such that
the following hold:
\begin{itemize}
\item 
For all $u\in \U$, there is $v\in \D$ such that $v^{n_0}=u$.
\item 
$\FLT {n_0}$ holds for $\D$. 
\end{itemize}
Then $\D$ has an infinite number of irreducibles.

\item
Assume that there is an $n_0\in\N$, $n_0\ge 2$,  such that
the following hold:
\begin{itemize}
\item
For all $u\in \U$, $u^{n_0}=u$.
\item
$\FLT {n_0}$ holds for $\D$. 
\end{itemize}
Then $\D$ has an infinite number of irreducibles.
(This follows from Part 1.) 

\end{enumerate}
\end{theorem}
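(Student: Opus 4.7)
The plan is to reduce both parts to Theorem~\ref{th:link} by verifying its hypothesis: that the equation
$$u_x X^{n_0} + u_y Y^{n_0} = u_z Z^{n_0}$$
has no solution with $u_x, u_y, u_z \in \U$ and $X, Y, Z \in \D - \{0\}$.

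For Part 1, I would argue by contradiction. Suppose $(u_x,u_y,u_z,X,Y,Z)$ is such a solution. By the first hypothesis, there exist $v_x, v_y, v_z \in \D$ with $v_x^{n_0} = u_x$, $v_y^{n_0} = u_y$, and $v_z^{n_0} = u_z$. Substituting gives
$$(v_x X)^{n_0} + (v_y Y)^{n_0} = (v_z Z)^{n_0}.$$
I then need to check that each of $v_x X$, $v_y Y$, $v_z Z$ lies in $\D - \{0\}$. Since $v_i^{n_0} = u_i$ is a unit, the element $v_i$ is itself a unit (it is inverted by $v_i^{n_0-1} u_i^{-1}$), hence nonzero; combined with $X, Y, Z \ne 0$ and the fact that $\D$ is an integral domain, each product is nonzero. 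This yields a nontrivial solution to $x^{n_0} + y^{n_0} = z^{n_0}$ in $\D$, contradicting $\FLT{n_0}$. Thus the premise of Theorem~\ref{th:link} holds, and $\D$ has an infinite number of irreducibles.

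For Part 2, the observation is that if $u^{n_0} = u$ for every unit $u$, then taking $v = u$ gives $v^{n_0} = u$, so the first condition of Part 1 is automatically satisfied. Hence Part 2 follows directly from Part 1.

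The only mild subtlety is the verification that $v_i \ne 0$, which is handled by the standard observation that an element of an integral domain whose $n_0$-th power is a unit must itself be a unit; everything else is a direct substitution and an appeal to Theorem~\ref{th:link}.
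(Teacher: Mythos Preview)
Your argument is correct and follows essentially the same route as the paper: both proofs use the $n_0$-th root hypothesis on units to absorb the coefficients $u_x,u_y,u_z$ into the bases, obtain a violation of $\FLT{n_0}$, and then appeal to Theorem~\ref{th:link} (the paper phrases it via the contrapositive, but the logic is identical). Your explicit check that each $v_i$---and hence each product $v_iX$, $v_iY$, $v_iZ$---is nonzero is a detail the paper leaves tacit.
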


\begin{proof}

Assume, by way of contradiction, that $\D$ has a finite number of irreducibles.
By Theorem~\ref{th:link}, for all $n\in \N$
there exist $u_x,u_y,u_z\in\U$ and $X,Y,Z\in \D-\{0\}$ such that the following holds:

$$u_xX^n+u_yY^n=u_zZ^n.$$

Take $n=n_0$. By the first premise, there exists $v_x, v_y, v_z$ such that 
$v_x^{n_0} = u_x$, 
$v_y^{n_0} = u_y$, 
$v_z^{n_0} = u_z$.
Hence 

$$(v_xX)^{n_0}+(v_yY)^{n_0}=(v_zZ)^{n_0}.$$

By the second premise, that $\FLT {n_0}$ holds for $\D$, this is a contradiction.
\end{proof}

\begin{corollary}\label{co:Z}~
\begin{enumerate}
\item
$\Z$ has an infinite number of irreducibles.
\item
$\Z$ has an infinite number of primes.
\end{enumerate}
\end{corollary}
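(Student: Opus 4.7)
The plan is to derive both parts of Corollary~\ref{co:Z} by a direct application of Theorem~\ref{th:strong}, taking $\D=\Z$ and $n_0=3$.

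First I would verify that $\Z$ is an atomic integral domain, which is standard (every nonzero nonunit integer factors as a product of primes by induction on absolute value, and primes are irreducible). Next I would check the two hypotheses of Part 2 of Theorem~\ref{th:strong}. The units of $\Z$ are exactly $\{1,-1\}$, and since $n_0=3$ is odd we have $1^3=1$ and $(-1)^3=-1$, so $u^{n_0}=u$ for every $u\in\U$. For the second hypothesis, I would cite Euler's 1770 proof of $\FLT 3$ (referenced in the excerpt via Ireland \& Rosen or Hardy \& Wright). Both hypotheses hold, so Theorem~\ref{th:strong} yields that $\Z$ has an infinite number of irreducibles, giving Part 1.

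For Part 2, I would invoke the fact from the definitions section that in any integral domain every prime is irreducible, and conversely note that in $\Z$ every irreducible is prime (this is standard, e.g.\ via the Euclidean algorithm showing $\Z$ is a PID, hence a UFD). Thus the set of primes in $\Z$ coincides (up to units) with the set of irreducibles, so Part 1 immediately gives Part 2.

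There is no real obstacle here; the entire corollary is essentially a dictionary check between the hypotheses of Theorem~\ref{th:strong} and classical facts about $\Z$. The only mildly substantive ingredient imported from outside the paper is Euler's $\FLT 3$, which does the real number-theoretic work; everything else (atomicity of $\Z$, the identification of units, the equivalence of primes and irreducibles in $\Z$) is routine.
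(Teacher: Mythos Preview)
Your proposal is correct and follows essentially the same route as the paper: apply Theorem~\ref{th:strong}.2 with $\D=\Z$ and $n_0=3$, noting that the units $\{1,-1\}$ satisfy $u^3=u$ and that $\FLT 3$ holds, and then pass from irreducibles to primes using that in $\Z$ these coincide. The paper's proof is simply terser, omitting the verification of atomicity and the justification that irreducibles equal primes in $\Z$.
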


\begin{proof}

\noindent
1) Let $n=3$. The only units in $\Z$ are $\{-1,1\}$. Note that (a) all $u\in \{-1,1\}$ 
satisfy $u^3=u$, and (b) $\FLT 3$ holds for $\Z$. 
Hence, by Theorem~\ref{th:strong}.2, $\Z$ has an infinite number of irreducibles.

\smallskip

\noindent
2) In $\Z$ all irreducibles are primes. Hence $\Z$ has an infinite number of primes.

\end{proof}

\section{A Sanity Check} 

As a sanity check on Theorem~\ref{th:link} we look at two integral domains that
have a {\it finite} number of irreducibles.

\begin{enumerate}
\item 
Consider $\Q$.
Note that $\U=\Q-\{0\}$, so there are no irreducibles.
Fix $n\ge 3$.
The premise of Theorem~\ref{th:link} does not hold. 
For all $n$ there is a solution to 
$$u_xX^n+u_yY^n=u_zZ^n$$
\noindent
with $u_x,u_y,u_z\in \U$, namely $u_x=u_y=\frac{1}{2}$, $u_z=1$, $X=Y=Z=1$. 

\item 
In this example the variables $a,b,c,d$ are always in $\Z$. 
Let $\D$ be the domain with set 

$$\biggl \{ \frac{a}{b} \st b\equiv 1 \pmod 2  \biggr \}.$$
Clearly  

$$\U=\biggl \{\frac{a}{b} \st a,b\equiv 1 \pmod 2\biggl \}.$$

We show that $\I=\{2\}$. Recall that what we really mean is that 
all irreducibles are of the form $2u$ where $u\in \U$. 

The nonzero elements that are not in $\U$ are in one of the following sets. 

\begin{enumerate}
\item
$\{ \frac{2c}{b} \st c\equiv 1 \pmod 2,\ \  b\equiv 1 \pmod 2 \}.$
Since $\frac{c}{b}\in \U$, these elements are irreducibles in the same
equivalence class as 2. 
\item 
$\{ \frac{2^d c}{b} \st  d\ge 2, c\equiv 1 \pmod 2,\ \  b\equiv 1 \pmod 2 \}.$
These elements are reducible since 
$\frac{2^d c}{b}=2 \times \frac{2^{d-1}c}{b}$ and, since $d\ge 2$, 
$\frac{2^{d-1}c}{b}$ is not a unit. 

\end{enumerate} 

We  must now see how $\D$ violates the premise of Theorem~\ref{th:link}.
We need to show that,
for all $n\in\N$, there is a solution to 
$$u_xX^n+u_yY^n=u_zZ^n$$
\noindent
with $u_x,u_y,u_z\in \U$. 

For $n=1$ we can take $u_x=u_y=u_z=X=Y=1$ and $Z=2$. 
For $n\ge 2$ we can take $u_x=2^{n-1}-1$, $u_y=2^{n-1}+1$, $X=Y=1$, $Z=2$. 
\end{enumerate}

\section{The Domain $\Z[\sqrt{-d}]$ Has an Infinite Number of Irreducibles}\label{se:sqrt}

\begin{lemma}\label{le:units}
Let $d\in\N$.
\begin{enumerate}
\item
If $d=1$ 
then the only units in $\Z[\sqrt{-d}]$
are $\{-1,1,-i,i\}$
\item
If $d\ge 2$ 
then the only units in $\Z[\sqrt{-d}]$
are $\{-1,1\}$
\item
If $d\in\N$ and $u$ is a unit of $\Z[\sqrt{-d}]$ then
$u^9=u$ (This follows from Part 1 and 2. It is also the case
that $u^5=u$; however, 9 is useful to us and, alas, 5 is not) 
\end{enumerate}
\end{lemma}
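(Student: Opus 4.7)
The plan is to use the standard norm on $\Z[\sqrt{-d}]$ to classify units, and then verify the exponent identity case-by-case.

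First I would define the norm $N \colon \Z[\sqrt{-d}] \to \N$ by $N(a+b\sqrt{-d}) = a^2 + db^2$, and observe (by a short direct computation, or by writing $N(\alpha) = \alpha\overline{\alpha}$) that $N$ is multiplicative. From multiplicativity it follows immediately that $\alpha$ is a unit if and only if $N(\alpha) = 1$: if $\alpha\beta = 1$ then $N(\alpha)N(\beta) = N(1) = 1$, forcing $N(\alpha) = 1$ since both are non-negative integers; conversely, if $N(\alpha) = 1$ then $\overline{\alpha}$ is an inverse of $\alpha$ in $\Z[\sqrt{-d}]$ (the conjugate of $a+b\sqrt{-d}$ being $a-b\sqrt{-d}$, which lies in $\Z[\sqrt{-d}]$).

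For Parts 1 and 2, I would then solve $a^2 + db^2 = 1$ in integers. When $d = 1$ this becomes $a^2 + b^2 = 1$, whose only solutions are $(\pm 1, 0)$ and $(0, \pm 1)$, giving $\{\pm 1, \pm i\}$. When $d \geq 2$, any $b \neq 0$ contributes at least $db^2 \geq 2$, which already exceeds $1$; so $b = 0$ and $a = \pm 1$, giving $\{\pm 1\}$. Nothing here is an obstacle — it is just bounding $a^2 + db^2$.

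For Part 3, I would simply verify $u^9 = u$ for each unit. In Part 2's case, $u = \pm 1$ satisfies $u^2 = 1$, so $u^9 = u \cdot (u^2)^4 = u$. In Part 1's case, for $u \in \{\pm 1, \pm i\}$ we have $u^4 = 1$, so $u^9 = u \cdot (u^4)^2 = u$. Thus $u^9 = u$ holds uniformly for all $d \in \N$. I would also briefly note why the author flags that $u^5 = u$ also works: the same argument with $u^9$ replaced by $u^5$ goes through since $4 \mid 5-1$ and $2 \mid 5-1$; the asymmetry the parenthetical hints at is not in the unit-group computation itself but in what exponent will be useful when invoking Theorem~\ref{th:strong} (which presumably will require $\FLT{n_0}$ for a specific $n_0$, where $9$ is available but $5$ is not).

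The only genuine content is the norm classification of units; Part 3 is then a trivial order-computation in a group of order at most $4$, so there is no substantive obstacle here.
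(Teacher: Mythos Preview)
Your proposal is correct and follows essentially the same approach as the paper: define the multiplicative norm $N(a+b\sqrt{-d})=a^2+db^2$, deduce that units have norm $1$, and solve $a^2+db^2=1$ in integers to obtain the unit group in each case. Your write-up is in fact slightly more thorough than the paper's, since you explicitly verify the converse (norm $1$ implies unit) and you spell out the order computation for Part~3, which the paper leaves as ``follows from Parts 1 and 2.''
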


\begin{proof}

Let $N$ be the standard norm

$$N(a+b\sqrt{-d}) = (a+b\sqrt{-d})(a-b\sqrt{-d}) = a^2 + b^2d.$$

It is well known and easy to verify that $N(xy) = N(x)N(y)$.

If $a_1+b_1\sqrt{-d}$ is a unit then there exist $a_2,b_2$
such that

$$(a_1+b_1\sqrt{-d})(a_2+b_2\sqrt{-d})=1$$

Take the norm of both sides to get

$$(a_1^2+b_1^2d)(a_2^2+b_2^2d)=1$$

Since squares are positive we have that
$a_1^2+b_1^2d=1$.

If $d=1$ then we have $a_1^2+b_1^2=1$, so
$(a_1,b_1)$ is either $(1,0)$, $(-1,0)$, $(0,1)$,
or $(0,-1)$. This yields units
$\{-1,1,-i,i\}$

If $d\ge 2$ then $b_1=0$ so the only units are
$-1,1$.
\end{proof}

Aigner~\cite{Aigner-1957} proved the following (see also Ribenbiom~\cite{Ribenbiom-1979}).

\begin{lemma}\label{le:flt9}
For all $d\in \Z$, $\FLT 6$ and $\FLT 9$ hold in $\Q(\sqrt{-d})$ and 
hence in $\Z[\sqrt{-d}]$. (We will only use $\FLT 9$.)
\end{lemma}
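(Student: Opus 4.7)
The plan is to reduce both $\FLT 6$ and $\FLT 9$ over $\Q(\sqrt{-d})$ to $\FLT 3$ over the same field, and then to prove $\FLT 3$ by an Euler--Kummer style infinite descent.

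First, I would record the exponent reductions. Any nonzero solution $(x,y,z)$ of $x^6+y^6=z^6$ produces the nonzero solution $(x^2,y^2,z^2)$ of $X^3+Y^3=Z^3$, and any nonzero solution $(x,y,z)$ of $x^9+y^9=z^9$ produces the nonzero solution $(x^3,y^3,z^3)$ of the same equation. Both assertions therefore follow once $\FLT 3$ is established in $\Q(\sqrt{-d})$, and by clearing denominators it is enough to rule out solutions in the ring of integers $\mathcal{O}_{\Q(\sqrt{-d})}$.

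Second, I would pass to the compositum $K=\Q(\sqrt{-d},\omega)$, where $\omega=e^{2\pi i/3}$ is a primitive cube root of unity, so that the cyclotomic factorization $X^3+Y^3=(X+Y)(X+\omega Y)(X+\omega^2 Y)$ becomes available in $\mathcal{O}_K$. The standard Kummer-style analysis then proceeds: form the ideals generated by the three factors, show that outside the unique prime above $3$ they are pairwise coprime, deduce that each (after stripping a common contribution from that prime) is a cube ideal, and finally descend to a strictly smaller putative solution, iterating to a contradiction via the ideal norm.

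The main obstacle lies in managing the arithmetic of $\mathcal{O}_K$. In general $\mathcal{O}_K$ is not a PID, so the descent must be carried out ideal-theoretically, and one has to argue carefully that a cube ideal comes from a cube element; this hinges on the cokernel of the cubing map on the unit group, which is subtle when $K$ admits real embeddings (most notably when $d\le 0$, where the unit group is infinite). There is also a separate case analysis around the prime above $3$, analogous to Euler's ``first'' and ``second'' cases in the classical $\Z$-argument. These are precisely the delicate matters that Aigner's paper handles, so I would regard the outline above as the roadmap and cite his theorem for the detailed verifications rather than reproduce the descent in full.
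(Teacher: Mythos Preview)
Your reduction step is logically sound: a nonzero solution of $x^6+y^6=z^6$ (resp.\ $x^9+y^9=z^9$) in $\Q(\sqrt{-d})$ does yield a nonzero solution of $X^3+Y^3=Z^3$ there. The fatal problem is the next step. You propose to prove $\FLT 3$ in $\Q(\sqrt{-d})$ for all $d\in\Z$, but that statement is \emph{false}. Immediately after the lemma the paper records the counterexample
\[
(18+17\sqrt{2})^3 + (18-17\sqrt{2})^3 = 42^3
\]
in $\Q(\sqrt{2})=\Q(\sqrt{-(-2)})$, so no Euler--Kummer descent can possibly establish $\FLT 3$ in that field. The difficulties you flag with infinite unit groups when $d\le 0$ are not merely technical obstacles in the descent; they reflect the fact that the target theorem is untrue.

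What your reduction actually shows is that any counterexample to $\FLT 6$ (resp.\ $\FLT 9$) in a quadratic field would produce a solution of $X^3+Y^3=Z^3$ in which $X,Y,Z$ are all \emph{squares} (resp.\ \emph{cubes}) in that field. Aigner's argument exploits this extra structure rather than proving $\FLT 3$ outright, and that additional constraint is exactly what makes the exponents $6$ and $9$ tractable uniformly over quadratic fields while $3$ is not. The paper itself gives no proof of the lemma---it simply cites Aigner's theorem---so there is no in-paper argument to compare against; but your outline, as written, passes through a false intermediate lemma and therefore cannot be completed.
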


\noindent
{\bf Note}
The following counterexamples show why Lemma~\ref{le:flt9} does not work for
$\FLT 3$, $\FLT 4$, or $\FLT {6k\pm 1}$. 
As far as we know it is an open problem as to whether Lemma~\ref{le:flt9} is true for 8.
\begin{itemize}
\item
In $\Q(\sqrt{2})$:
$(18+17\sqrt{2})^3 + (18-17\sqrt{2})^3 = 42^3.$
\item
In $\Q(\sqrt{-7})$:
$(1+\sqrt{-7})^4 + (1-\sqrt{-7})^4 = 2^4.$
\item
In $\Q(\sqrt{-3})$:
$(1+\sqrt{-3})^{6k\pm 1} + (1-\sqrt{-3})^{6k\pm 1} = 2^{6k\pm 1}.$
\end{itemize}

\begin{theorem}
Let $d\ge 1$. Then there are an infinite number of irreducibles in $\Z[\sqrt{-d}]$.
\end{theorem}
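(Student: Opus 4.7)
The plan is to apply Theorem~\ref{th:strong}.2 directly, with $n_0 = 9$. All of the heavy lifting has already been done in the preceding lemmas, so the argument reduces to checking that every hypothesis is in place.

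First I would note that $\Z[\sqrt{-d}]$ is an atomic integral domain (as observed in the definitions section, verified via the standard norm) and that it contains $\N$, so it is a valid input to Theorem~\ref{th:strong}. Next I would verify the two bullet-point hypotheses of part 2 of that theorem at $n_0 = 9$. The requirement that $u^9 = u$ for every unit $u$ is exactly Lemma~\ref{le:units}.3, which handles uniformly the case $d=1$ (units $\{\pm 1, \pm i\}$) and the case $d \ge 2$ (units $\{\pm 1\}$). The requirement that $\FLT 9$ hold in $\Z[\sqrt{-d}]$ is the half of Lemma~\ref{le:flt9} that the authors earmarked for this purpose. With both hypotheses satisfied, Theorem~\ref{th:strong}.2 yields that $\Z[\sqrt{-d}]$ has infinitely many irreducibles, completing the proof.

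The main ``obstacle'' is really a compatibility issue that has already been resolved by the choice $n_0 = 9$: one needs an exponent that simultaneously satisfies the unit condition and lies in the range where FLT is known to hold over $\Z[\sqrt{-d}]$. The smaller candidate $n_0 = 6$ (also given by Lemma~\ref{le:flt9}) fails the unit condition when $d=1$, since $i^6 = -1 \neq i$, and the still smaller exponents $n_0 = 3, 4$ fail the FLT condition by the explicit counterexamples listed right after Lemma~\ref{le:flt9}. Meanwhile the exponent $n_0 = 5$ mentioned parenthetically in Lemma~\ref{le:units}.3 is not known to satisfy FLT in $\Z[\sqrt{-d}]$. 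The value $n_0 = 9$ is exactly the value that threads both needles, so the proof is essentially a one-line invocation of Theorem~\ref{th:strong}.2.
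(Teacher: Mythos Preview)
Your proposal is correct and matches the paper's proof essentially line for line: the paper also notes atomicity via the norm, sets $n_0=9$, invokes Lemma~\ref{le:units} for the unit condition and Lemma~\ref{le:flt9} for $\FLT 9$, and then applies Theorem~\ref{th:strong}.2. Your additional commentary on why $n_0=9$ rather than $3,4,5,6$ is accurate and mirrors the paper's own remarks surrounding Lemma~\ref{le:flt9} and Lemma~\ref{le:units}.
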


\begin{proof}
Let $\D=\Z[\sqrt{-d}]$. One can show that $\D$ is atomic using norms. 

Let $n_0=9$.
By Lemma~\ref{le:units}, for all $u\in \U$, $u^{n_0}=u$.
By Lemma~\ref{le:flt9} $\FLT {n_0}$ holds for $\D$.
By Theorem~\ref{th:strong}.2 with $n_0=9$, $\D$ has an infinite number
of irreducibles.
\end{proof}

\section{Conjecturally, Some $\D$ Have an Infinite Number of Irreducibles}\label{se:conj}

Debarre-Klassen~\cite{DK-1994} stated the following conjecture:

\begin{conjecture}\label{co:nf}
Let $\K$ be a number
field of degree $d$ over $\Q$. Let $n\ge d+2$. 
Then $\FLT n$ holds for $\K$.
\end{conjecture}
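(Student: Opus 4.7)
The Debarre--Klassen conjecture is a well-known open problem, so any ``proof'' I propose is really a sketch of the strategy one would have to complete, rather than a routine derivation. The natural template is the modular approach pioneered by Frey, Serre, Ribet, Wiles, and Taylor--Wiles for $\FLT n$ over $\Q$, now carried out carefully over a general number field $\K$ of degree $d$. So the plan is: suppose for contradiction $a^n+b^n=c^n$ with $a,b,c\in\mathcal O_\K-\{0\}$ pairwise coprime (after clearing denominators and dividing by common factors, using that $\mathcal O_\K$ is a Dedekind domain rather than a UFD requires some care with ideals). Attach to this solution the Frey--Hellegouarch elliptic curve $E_{a,b,c}/\K$ defined by $Y^2=X(X-a^n)(X+b^n)$, compute its conductor, and study its mod-$n$ Galois representation $\overline{\rho}_{E,n}:\mathrm{Gal}(\overline\K/\K)\to \mathrm{GL}_2(\mathbb F_n)$.

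The next step is to prove $E_{a,b,c}$ is modular: it should correspond to a Hilbert (or more generally automorphic) cuspidal newform over $\K$ of parallel weight $2$ and some level $\mathfrak N$ dividing the Serre conductor. Modularity lifting theorems -- now available over totally real fields thanks to the work of Skinner--Wiles, Kisin, Barnet-Lamb--Geraghty--Harris--Taylor, and (in progress/recent) over imaginary quadratic and CM fields through Calegari--Geraghty and their successors -- should suffice in many ranges, though not yet unconditionally for arbitrary $\K$. One then applies level-lowering (the analogue of Ribet's theorem in this setting, due in various cases to Jarvis, Fujiwara, and others) to produce a newform of much smaller level $\mathfrak N_0$ independent of $n$, whose mod-$n$ Galois representation matches $\overline{\rho}_{E,n}$.

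Finally, to close the argument one needs to prove there is \emph{no} such newform at level $\mathfrak N_0$ with the required local properties. The hypothesis $n\ge d+2$ should enter here via an Eichler--Shimura style bound: the dimension of the relevant space of automorphic forms is controlled by invariants of $\K$ (ultimately by $d$), and a ``large image'' result together with a comparison of traces of Frobenius at a small auxiliary prime eliminates residual matchings when $n$ exceeds the bound. Something like $n\ge d+2$ is exactly the threshold that makes the cyclotomic character mod $n$ fail to descend to $\K$, allowing one to rule out CM and reducible residual representations uniformly.

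The main obstacle is the modularity step over a general (not totally real) number field. For $\K=\Q$ this is Wiles--Taylor--Wiles; for totally real $\K$ it is essentially known but requires serious automorphy lifting technology; for CM fields there is now promising work but not a clean general theorem; for arbitrary $\K$ it is open. So a complete proof of the conjecture awaits a fully general modularity theorem for elliptic curves over number fields, together with the uniform elimination of small-level newforms that the $n\ge d+2$ bound is calibrated to allow. In the meantime, the best one can realistically write is a conditional theorem, or an unconditional result for specific $d$ and restricted $\K$ (e.g.\ real quadratic, following Freitas--Siksek), which is presumably how the next section of the paper uses the conjecture.
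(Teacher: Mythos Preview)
There is nothing to compare here: the paper does not prove this statement, nor does it attempt to. It is explicitly labeled a \emph{Conjecture} (attributed to Debarre--Klassen) and is used only as a hypothesis in the subsequent theorem, which shows that \emph{if} the conjecture holds then certain atomic subdomains of number fields with finitely many units have infinitely many irreducibles.

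You correctly recognized that this is an open problem, and your sketch of the Frey--Serre--Ribet--Wiles modular strategy over a general number field is a reasonable account of how one might eventually attack it. But to be clear, none of that appears in the paper, and none of it is needed: the paper treats the conjecture as a black box. So your proposal is not wrong so much as misdirected---there is no proof in the paper to match, and the appropriate ``proof attempt'' for a conjecture used as a hypothesis is simply to state that it is assumed, not proved.
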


\begin{theorem}
Assume Conjecture~\ref{co:nf} is true. 
Let $\K$ be a number field of finite degree over $\Q$. 
Let $\D$ be an atomic subdomain of $\K$ with a finite number
of units. Then $\D$ has an infinite number of
irreducibles.
\end{theorem}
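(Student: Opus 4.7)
The plan is to reduce the statement to Theorem~\ref{th:strong}.2 by choosing a suitable exponent $n_0$. I need to produce some $n_0 \geq 2$ such that (i) $u^{n_0} = u$ for every $u \in \U$, and (ii) $\FLT{n_0}$ holds in $\D$. The conjecture gives me (ii) for all sufficiently large $n$, and the finiteness of $\U$ will give me (i) along an arithmetic progression of $n$'s. Taking $n_0$ in the intersection finishes the job.

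First, I would record the structure of the unit group. Since $\U$ is a finite subgroup of the multiplicative group of the field $\K$, and every finite subgroup of a field's multiplicative group is cyclic, $\U$ is cyclic of some order $N := |\U|$. Thus $u^N = 1$ for every $u \in \U$, and so $u^{n_0} = u$ whenever $n_0 \equiv 1 \pmod N$.

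Second, let $d = [\K : \Q]$, which is finite by hypothesis. By Conjecture~\ref{co:nf}, $\FLT n$ holds in $\K$ for every $n \geq d+2$. Since $\D - \{0\} \subseteq \K - \{0\}$, any solution to $x^n + y^n = z^n$ in $\D - \{0\}$ would be a solution in $\K - \{0\}$; thus $\FLT n$ also holds in $\D$ for all $n \geq d+2$.

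Third, I would pick any integer $n_0$ with $n_0 \geq \max(d+2,2)$ and $n_0 \equiv 1 \pmod N$. The arithmetic progression $1, 1+N, 1+2N, \ldots$ is unbounded, so such an $n_0$ exists (and if $N = 1$ we may simply take $n_0 = \max(d+2, 2)$). By the two paragraphs above, both hypotheses of Theorem~\ref{th:strong}.2 are satisfied for this $n_0$, and that theorem delivers the conclusion that $\D$ has an infinite number of irreducibles. There is no genuine obstacle here; the only point that needs a moment's thought is the cyclicity of $\U$, which is a standard fact about finite multiplicative subgroups of fields.
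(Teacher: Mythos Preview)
Your proof is correct and follows essentially the same route as the paper: find a common exponent $N$ with $u^N=1$ for all units, then choose $n_0\equiv 1\pmod N$ large enough that Conjecture~\ref{co:nf} applies, and invoke Theorem~\ref{th:strong}.2. The only cosmetic difference is that you obtain $N$ via the cyclicity of finite multiplicative subgroups of a field, whereas the paper simply takes the lcm of the individual orders of the units; Lagrange's theorem alone would suffice, so the appeal to cyclicity is a slight overkill but certainly not wrong.
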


\begin{proof}
Let $\K$ and $\D$ be as in the premise.

Since $\D$ has a finite number of units, for each unit $u$,
there exists $n_u$ such that $u^{n_u}=1$.
Let $n_U$ be the lcm of all the $n_u$.
Note that, for all units $u$, $u^{n_U}=1$.
Hence, for all $n\equiv 1 \pmod {n_U}$, $u^n=u$.

Let $n_0$ be such that $n_0\equiv 1 \pmod {n_U}$ and
$n_0\ge d+2$. Then 
(1) $\FLT {n_0}$ holds in $\D$, and 
(2) for all $u\in \U$, $u^{n_0}=u$. 
By Theorem~\ref{th:strong}.2, $\D$ has
an infinite number of irreducibles.
\end{proof}

\section{Open Problem}\label{se:open}
Find other domains to apply Theorem~\ref{th:link} to.
This might involve proving, for fixed $n$,  variants of $\FLT n$ 
that allow units as coefficients.

\section{Acknowledgments}

I thank 
Nathan Cho, 
Emily Kaplitz,
Issac Mammel,
David Marcus,
Adam Melrod, 
Yuang Shen,
Larry Washington, and 
Zan Xu  for proofreading and commentary.
We thank the referees for insightful comments and references
that improved both the readability and correctness of this paper.

\end{document}